\author{Andrea Malchiodi \\ \footnotesize SISSA, Trieste \\ \footnotesize \texttt{malchiod@sissa.it} \and  Luca Martinazzi \\ \footnotesize Rutgers University \\ \footnotesize \texttt{martinazzi@math.rutgers.edu}}
\title{Critical points of the Moser-Trudinger functional on a disk}
\newtheorem{trm}{Theorem}
\newtheorem{lemma}[trm]{Lemma}
\newcommand{\R}[1]{\mathbb{R}^{#1}}
\newcommand{\de}{\partial}
\newcommand{\ve}{\varepsilon}
\newcommand{\M}[1]{\mathcal{#1}}
\newenvironment{proof}{\noindent\emph{Proof.}}{\phantom{ } \hfill$\square$\medskip}
\DeclareMathOperator{\loc}{loc}
\renewcommand{\l }{\lambda }
\renewcommand{\L }{\Lambda }
\newcommand{\be}{\begin{equation*}}
\newcommand{\ee}{\end{equation*}}
\newcommand{\e }{\varepsilon }
\newcommand{\bdm}{\begin{displaymath}}
\newcommand{\edm}{\end{displaymath}}
\newcommand{\n }{\nabla }
\newcommand{\D }{\Delta }
\begin{document}
\maketitle

\begin{abstract} \noindent On the unit disk $B_1\subset \R{2}$ we study the Moser-Trudinger functional
$$E(u)=\int_{B_1}\Big(e^{u^2}-1\Big)dx,\quad u\in H^1_0(B_1)$$
and its restrictions $E|_{M_\Lambda}$, where $M_{\Lambda}:=\{u\in H^1_0(B_1):\|u\|^2_{H^1_0}=\Lambda\}$ for $\Lambda>0$. We prove that if a sequence $u_k$ of positive critical points of $E|_{M_{\Lambda_k}}$ (for some $\Lambda_k>0$)
blows up as $k\to\infty$, then $\Lambda_k\to 4\pi$, and $u_k\to 0$ weakly in $H^1_0(B_1)$ and strongly in $C^1_{\loc}(\overline B_1\setminus\{0\})$.

Using this we also prove that when $\Lambda$ is large enough, then $E|_{M_\Lambda}$ has no positive critical point, complementing previous existence results by Carleson-Chang, M. Struwe and Lamm-Robert-Struwe.
\end{abstract}

\begin{center}

\bigskip\bigskip

\noindent{\it Key Words:} Moser-Trudinger inequality, critical points, blow-up analysis,
variational methods

\bigskip

\centerline{\bf AMS subject classification: 35B33,  35B44, 35A01, 34E05}

\end{center}

\section{Introduction}

Let $\Omega\Subset\R{2}$ be a smooth bounded and connected open set. It is well-known that there is a Sobolev embedding $W^{1,p}_0(\Omega) \hookrightarrow L^\frac{2p}{2-p}(\Omega)$ for $p\in [1,2)$, but $H^1_0(\Omega):=W^{1,2}_0(\Omega) \not \hookrightarrow L^\infty(\Omega)$. However it was proven by N. Trudinger \cite{tru} that $e^{u^2}\in L^1(\Omega)$ whenever $u\in H^1_0(\Omega)$. This embedding was sharpened by J. Moser \cite{mos} who showed that
\begin{equation}\label{eq:MT}
    \sup_{u\in H^1_0(\Omega),\; \|u\|_{H^1_0}^2\le 4\pi} \; \int_\Omega\Big( e^{u^2}-1\Big) dx \leq C|\Omega|, \qquad \|u\|_{H^1_0}:=\bigg(\int_\Omega |\n u|^2 dx \bigg)^{\frac{1}{2}},
\end{equation}
and
\begin{equation}\label{eq:MT2}
    \sup_{u\in H^1_0(\Omega),\; \|u\|_{H^1_0}^2\le 4\pi+\delta}\; \int_\Omega\Big( e^{u^2}-1\Big)
    dx =+\infty, \qquad \text{for every }\delta>0.
\end{equation}
Since then, a formidable amount of work has been devoted to the study of the functional
$$E(u):=\int_\Omega\Big( e^{u^2}-1\Big)dx,\qquad u\in H^1_0(\Omega)$$
and in particular of its critical points. Clearly $u\equiv 0$ is the only global minimum of $E$, but because of \eqref{eq:MT2} we cannot look for a global maximizer of $E$ in $H^1_0$. Instead one might hope to find a maximizer of $E|_{M_\Lambda}$, i.e. of $E$ constrained to the manifold
$$M_\Lambda:=\Big\{u\in H^1_0(\Omega):\|u\|^2_{H^1_0}=\Lambda\Big\}$$
for $\Lambda\in (0,4\pi]$, or to find other kinds of critical points (local maxima or minima, saddle points, etc.) when $\Lambda>4\pi$. As long as $\Lambda<4\pi$ the embedding \eqref{eq:MT} is in fact compact, so the existence of a maximizer is elementary, but when $\Lambda\ge 4\pi$  compactness is lost and also the Palais-Smale condition does not hold anymore, see \cite{AP}.

In spite of these difficulties Carleson and Chang \cite{CC} proved that when $\Omega=B_1(0)$ (the unit disk in $\R{2}$)  $E|_{M_{4\pi}}$ has a maximizer. This result was extended by Struwe \cite{sIHP} who proved the existence of a maximizer in $M_{4\pi}$ when $\Omega$ is close to a ball, and finally by Flucher \cite{flu} for any bounded smooth $\Omega$ (see also
\cite{DDR} for a related result in higher dimension).

The existence of critical points on $M_\Lambda$ in the supercritical regime, i.e. for $\Lambda>4\pi$, is even more challenging, and to the fundamental question of the existence of critical points of $E|_{M_{\Lambda}}$ for $\Lambda$ large only few answers have been given.  Monahan \cite{mon} gave numerical evidence that when $\Omega=B_1(0)$ then for some $\Lambda^*>4\pi$ the functional $E|_{M_\Lambda}$ has a local maximum and a mountain pass critical point for every $\Lambda\in (4\pi,\Lambda^*)$. Assuming that a local maximum of $E|_{M_{4\pi}}$ exists (which was later shown to be true for arbitrary domains by Flucher \cite{flu}) Struwe proved in \cite{sIHP} that for some $\L^* = \L^*(\Omega) > 4 \pi$ and
for a.e. $\L \in (4 \pi, \L^*)$ two critical points exists. This result was then extended in \cite{LRS} to
all values of $\L \in (4 \pi, \L^*)$ through the more precise information given by a parabolic flow,
compared to the one given by the Palais-Smale condition.

Further, using implicit function methods, Del Pino, Musso and Ruf \cite{DMR2} were able to characterize some
of these critical points as one-peaked bubbling functions which blow-up as $\L \searrow 4 \pi$. In the same
paper they showed that if $\Omega$ is not contractible, then for some $\Lambda^\dag>8\pi$ the functional $E|_{M_\Lambda}$ has a critical point of multi-peak type for $\Lambda\in (8\pi, \Lambda^\dag)$. When $\Omega$ is a radially symmetric annulus they also proved for any $1\le \ell\in\mathbb{N}$ the existence of some $\Lambda_\ell^* >4\pi\ell$ such that $E|_{M_\Lambda}$ has a critical point when $\Lambda\in (4\pi\ell, \Lambda_\ell^*)$.
We also refer to \cite{sJEMS} and \cite{LRS}
for related results on domains with {\em small holes}, in the spirit
of \cite{cor} (where the Yamabe equation was treated).

\

The previous results, in particular those in \cite{DMR2}, suggest that at least when $\Omega$ is not contractible $E|_{M_\Lambda}$ might have critical points even when $\Lambda$ is much larger that $4\pi$.
In this paper we will show that such a topological assumption on $\Omega$ is natural. In fact we will prove that when $\Omega= B_1(0)$, then $E|_{M_\Lambda}$ has no positive critical points for $\Lambda$ large enough.

\begin{trm}\label{nonex} For $\Omega= B_1(0)$ there exists $\Lambda^{\sharp}> 4\pi$ such that the functional $E|_{M_\Lambda}$ has
\begin{itemize}
\item[(i)] no positive critical points for $\Lambda>\Lambda^{\sharp}$,
\item[(ii)]  at least $2$ positive critical points for $\Lambda\in (4\pi, \Lambda^{\sharp})$,
\item[(iii)] at least one positive critical point for $\Lambda\in (0,4\pi]\cup \{\Lambda^{\sharp}\}$.
\end{itemize}
\end{trm}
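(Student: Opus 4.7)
The plan is to combine the main blow-up theorem of the introduction with a Rayleigh-quotient a priori bound on the Lagrange multiplier and the existing existence results of \cite{CC,sIHP,LRS}. First I would observe that any positive critical point $u\in M_\Lambda$ satisfies the Euler--Lagrange equation
\be
-\D u = \mu\, u\, e^{u^2} \quad \text{in } B_1, \qquad u=0 \text{ on } \de B_1,
\ee
where $\mu>0$ is the reciprocal of the Lagrange multiplier, and by Gidas--Ni--Nirenberg $u$ is radial and radially decreasing. I would then set
\be
\Lambda^\sharp := \sup\{\Lambda>0 : E|_{M_\Lambda} \text{ admits a positive critical point}\},
\ee
so that the three items of the theorem reduce to showing that $\Lambda^\sharp\in(4\pi,\infty)$, that this supremum is attained, and that a second positive critical point exists on $M_\Lambda$ for every $\Lambda\in(4\pi,\Lambda^\sharp)$.

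For (i) the main step would be to bound $\mu$ a priori, independently of $u$. Since $u>0$ solves $-\D u=\mu e^{u^2}u$, it is the principal eigenfunction of the weighted eigenvalue problem $-\D\var=\lambda\, e^{u^2}\var$ in $H^1_0(B_1)$; Rayleigh's characterization combined with $e^{u^2}\ge 1$ then yields
\be
\mu \;=\; \inf_{\var\in H^1_0\setminus\{0\}}\frac{\int_{B_1}|\n\var|^2 dx}{\int_{B_1}e^{u^2}\var^2 dx}\;\le\;\frac{\int_{B_1}|\n\phi_1|^2 dx}{\int_{B_1}e^{u^2}\phi_1^2 dx}\;\le\;\lambda_1(B_1),
\ee
where $\phi_1>0$ is the first Dirichlet eigenfunction of $-\D$ on $B_1$. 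Testing the equation against $u$ gives $\Lambda=\mu\int_{B_1}u^2 e^{u^2}\,dx\le\lambda_1(B_1)\|u\|_\infty^2 e^{\|u\|_\infty^2}|B_1|$. Hence along any sequence of positive critical points with $\Lambda_k\to\infty$ one necessarily has $\|u_k\|_\infty\to\infty$, i.e.\ blow-up occurs; the main blow-up theorem then forces $\Lambda_k\to 4\pi$, a contradiction. This proves $\Lambda^\sharp<\infty$ and (i).

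For (iii), existence for $\Lambda\in(0,4\pi)$ follows by maximizing $E|_{M_\Lambda}$ (subcritical Moser--Trudinger compactness, Schwarz symmetrization, and the strong maximum principle yield a positive radial maximizer), while $\Lambda=4\pi$ is the Carleson--Chang theorem \cite{CC}. To reach $\Lambda=\Lambda^\sharp$ I would pick $\Lambda_k\nearrow\Lambda^\sharp$ with positive critical points $u_k$; using that \cite{sIHP,LRS} ensures $\Lambda^\sharp>4\pi$, the blow-up theorem rules out blow-up along $(u_k)$, so $\|u_k\|_\infty$ and $\mu_k$ are bounded (the latter by $\lambda_1(B_1)$), and elliptic regularity extracts a $C^{1,\alpha}$-convergent subsequence whose nontrivial positive limit is a critical point on $M_{\Lambda^\sharp}$. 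For (ii), two positive critical points on some initial interval $(4\pi,\Lambda^*)$ are provided by \cite{sIHP,LRS}; to extend them to all of $(4\pi,\Lambda^\sharp)$ I would argue that both variational branches (a perturbation of the Carleson--Chang maximizer and a mountain-pass solution) can only fail to persist via blow-up, which the main theorem confines to $\Lambda\to 4\pi$.

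The main obstacle I expect is (ii): extending the two-critical-point result of \cite{LRS} from the possibly narrow interval $(4\pi,\Lambda^*)$ all the way to the sharp threshold $\Lambda^\sharp$ requires a careful min-max/topological continuation argument, which must use the compactness of positive critical points away from $\Lambda=4\pi$ provided by the blow-up theorem in an essential way. By contrast, (i) and the limiting argument at $\Lambda=\Lambda^\sharp$ in (iii) are comparatively direct once the Rayleigh-quotient bound on $\mu$ and the blow-up theorem are in hand.
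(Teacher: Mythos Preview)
Your arguments for (i) and for attainment at $\Lambda=\Lambda^\sharp$ are essentially correct, but you are missing the key structural observation that makes (ii) immediate and renders the variational continuation you propose unnecessary. By Gidas--Ni--Nirenberg every positive critical point is radial, and by ODE shooting there is, for each $\mu>0$, a \emph{unique} positive radial solution $u_\mu$ of \eqref{eqMT} on $B_1$ with $u_\mu(0)=\mu$ (for a suitable $\lambda=\lambda_\mu>0$). Hence the set of positive critical points is parametrized by the single real parameter $\mu\in(0,\infty)$, and the map
\[
\mathcal E(\mu):=\|u_\mu\|_{H^1_0}^2
\]
is continuous by smooth dependence of ODE solutions on initial data. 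One checks $\mathcal E(\mu)\to 0$ as $\mu\downarrow 0$, and Theorem~\ref{t:comp} gives $\mathcal E(\mu)\to 4\pi$ as $\mu\to\infty$. Thus $\Lambda^\sharp:=\sup_\mu \mathcal E(\mu)<\infty$ and is attained at some $\mu_0\in(0,\infty)$; since \cite{sIHP} yields $\Lambda^\sharp>4\pi$, the intermediate value theorem applied separately on $(0,\mu_0)$ and on $(\mu_0,\infty)$ gives, for every $\Lambda\in(4\pi,\Lambda^\sharp)$, two distinct values of $\mu$ with $\mathcal E(\mu)=\Lambda$. Parts (ii) and (iii) then follow in one line.

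By contrast, your route to (ii)---continuing the local-maximum and mountain-pass branches from \cite{sIHP,LRS} and arguing that they ``can only fail via blow-up''---is not justified as stated: branches can merge (which is in fact exactly what happens at $\Lambda^\sharp$), min-max values can coincide without blow-up, and making this rigorous would require a genuine degree or bifurcation argument. The ODE parametrization bypasses all of this. Your Rayleigh-quotient bound $\mu\le\lambda_1(B_1)$ is correct and gives a valid alternative proof of (i), but note that it too becomes unnecessary once $\mathcal E$ is known to be continuous with finite limits at both ends of $(0,\infty)$.
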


The proof of the non-existence part in Theorem \ref{nonex} (Part (i)) will be completely self-contained.  To prove Parts (ii) and (iii) we will also use Theorem 1.7 from \cite{sIHP}, which gives the existence of some $\Lambda^*>4\pi$  such that a positive critical point of $E|_{M_\Lambda}$ exists for $\Lambda\in (4\pi,\Lambda^*)$. Actually by  \cite[Theorem 1.8]{sIHP} and \cite[Theorem 6.5]{LRS}, $E|_{M_\Lambda}$ has two positive critical points whenever $\Lambda\in (4\pi,\Lambda_*)$, for some $\Lambda_*\in (4\pi,\Lambda^*]$. Then Theorem \ref{nonex} complements  these results by showing that, at least when $\Omega=B_1$, the existence of two positive critical points of $E|_{M_\Lambda}$  for $\Lambda>4\pi$ persists until we reach the energy threshold $\Lambda=\Lambda^\sharp$, beyond which we have non-existence. A qualitatively similar picture
has also been shown in \cite{jl}, \cite{mp} (as well as in several subsequent papers in the literature)
for the problem $- \Delta u = \mu f(u)$ in  bounded domains of $\R{n}$. Differently from these results,
we focus on the Dirichlet energy rather than on the parameter $\mu$, and we deal with a faster growth of the nonlinearity.

\

In order to prove Theorem \ref{nonex} we first notice that a critical point of $E|_{M_\Lambda}$ solves
\begin{equation}\label{pb0}
  \left\{
    \begin{array}{ll}
      - \D u = \l u e^{u^2} & \hbox{in } \Omega; \\
      u = 0  & \text{on } \partial \Omega;\\
      \|u\|_{H^1_0}^2=\Lambda, &
    \end{array}
  \right.
\end{equation}
for some $\lambda>0$, and that when $\Omega=B_1$ a positive solution to \eqref{pb0} is radially symmetric by Theorem 1 in \cite{GNN}. Then it will be crucial to understand the blow-up behavior of a sequence of symmetric positive solutions to \eqref{pb0}, i.e. solutions $u_k$ to

\begin{equation}\label{eqk}
\left\{
\begin{array}{ll}
- \Delta u_k = \lambda_k u_k e^{u_k^2} &\text{in }  B_1; \\
u_k=0&\text{on } \de B_1;\\
u_k>0&\text{in } B_1;\\
\|u_k\|^2_{H^1_0}=\Lambda_k.&
\end{array}
\right.
\end{equation}

In this direction a lot of work has already been done. For the sake of simplicity we shall present only the radially symmetric versions of the results which we quote, referring to the original papers for the general cases. For instance O. Druet proved (see also \cite{AS} and
\cite{AD} for previous related results, where the blow-up profile was identified, and \cite{LRS} where the parabolic case was treated): let $(u_k)$ be a sequence of solutions to \eqref{eqk} with $\Lambda_k\le C$ and $\sup_{B_1} u_k\to\infty$. Then up to a subsequence $\lambda_k\to\lambda_\infty\in [0,2\pi]$,\footnote{The constant $2\pi$ is the first eigenvalue of $-\Delta$ on $B_1$. As proven by Adimurthi \cite{adi}, Problem \eqref{pb0} has a positive solution if and only if $\lambda\in (0,\lambda_1(\Omega))$, where $\lambda_1(\Omega)$ is the first eigenvalue of $-\Delta$ on $\Omega$ with Dirichlet boundary condition.}  $u_k\to u_\infty$ strongly in $C^1_{\loc}(\overline B_1\setminus \{0\})$ and weakly in $H^1_0(B_1)$, where
\begin{equation}\label{eq:inf}
-\Delta u_\infty=\lambda_\infty u_\infty e^{u_\infty^2}\quad \text{in }B_1,
\end{equation}
and $\Lambda_k\to 4\pi L+\|u_\infty\|^2_{H^1_0}$ for some integer $L\ge 1$. More precisely
\begin{equation}\label{quantdru}
|\nabla u_k|^2 dx\rightharpoonup 4\pi L \delta_0+|\nabla u_\infty|^2dx,\quad   \lambda_k u_k^2 e^{u_k^2}dx \rightharpoonup 4\pi L \delta_0+ \lambda_\infty u_\infty^2 e^{u_\infty^2}dx
\end{equation}
weakly in the sense of measures. The questions whether $u_\infty$ and $\lambda_\infty$ can actually be non-zero, and whether $L$ can be greater than one (i.e. whether the blow-up can be non-simple, using a terminology introduced in \cite{scho}) were left open  (in fact also higher dimensional generalization of the result of Druet, see e.g. \cite{mar2}, \cite{MS} and \cite{struwe}, produced analogous open questions), but we are now able to give a negative answer to both questions, as stated in the next theorem.

\begin{trm}\label{t:comp} Let $u_k$ be a sequence of solutions to \eqref{eqk}.
Then up to extracting a subsequence we have for $k\to\infty$ either
\begin{itemize}
\item[(i)] $\lambda_k\to\lambda_\infty\in [0,2\pi]$, $u_k\to u_\infty$ in $C^{1}(\overline B_1)$, where $u_\infty$ solves \eqref{eq:inf},
or
\item[(ii)] $\lambda_k\to 0$, $\|u_k\|^2_{H^1_0}\to 4\pi$, $u_k\to 0$ weakly in $H^1_0(B_1)$ and strongly in $C^1_{\loc}(\overline B_1 \setminus\{0\})$ and
\begin{equation}\label{quant0}
|\nabla u_k|^2 dx\rightharpoonup 4\pi \delta_0, \quad \lambda_k u_k^2e^{u_k^2} dx \rightharpoonup 4\pi \delta_0
\end{equation}
weakly in the sense of measures.
\end{itemize}
\end{trm}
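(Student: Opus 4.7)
Let $M_k := u_k(0) = \sup_{B_1} u_k$, the maximum being attained at the origin by the radial monotonicity guaranteed by Gidas-Ni-Nirenberg. I would first argue by dichotomy. If $M_k$ remains bounded along a subsequence, then $\lambda_k u_k e^{u_k^2}$ is bounded in $L^\infty(B_1)$, elliptic regularity gives $u_k$ bounded in $C^{2,\alpha}(\overline B_1)$, and Arzel\`a-Ascoli delivers $u_k\to u_\infty$ in $C^1(\overline B_1)$, which is alternative (i). Otherwise $M_k\to\infty$, and the cited theorem of Druet provides (up to subsequence) $\lambda_k\to\lambda_\infty\in[0,2\pi]$, $u_k\to u_\infty$ in $C^1_{\loc}(\overline B_1\setminus\{0\})$ and weakly in $H^1_0(B_1)$, with $u_\infty$ solving \eqref{eq:inf}, the quantization \eqref{quantdru}, and $\Lambda_k\to 4\pi L+\|u_\infty\|^2_{H^1_0}$ for some integer $L\ge 1$. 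It remains to show $u_\infty\equiv 0$, $\lambda_\infty=0$, and $L=1$; \eqref{quant0} then follows from \eqref{quantdru}.

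The crucial step is to prove $u_\infty\equiv 0$ via a Green-function identity. Set $r_k:=(2\lambda_k M_k^2 e^{M_k^2})^{-1/2}$, which tends to zero, and $\eta_k(z):=2M_k(u_k(r_k z)-M_k)$; standard Moser-Trudinger blow-up analysis (cf.\ Druet, Adimurthi-Druet) gives $\eta_k\to\eta$ in $C^2_{\loc}(\R{2})$, with $\eta(z)=-2\log(1+|z|^2/8)$ solving $-\Delta\eta=e^\eta$ on $\R{2}$ and $\int_{\R{2}}e^\eta=8\pi$. Since the Dirichlet Green function of $-\Delta$ on $B_1$ with pole at $0$ is $G_0(y)=-\tfrac{1}{2\pi}\log|y|$, the representation formula reads
\be
M_k \;=\; -\frac{1}{2\pi}\int_{B_1}\log|y|\,\lambda_k u_k(y) e^{u_k^2(y)}\,dy.
\ee
I would split this integral into an inner part $B_{Lr_k}$, a neck $B_\delta\setminus B_{Lr_k}$, and an outer part $B_1\setminus B_\delta$. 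On the inner part, the rescaling and $\log|y|=\log r_k+\log|z|$ yield a contribution $\tfrac{4\pi\log r_k}{M_k}+o(1)=-2\pi M_k+o(1)$, using $\log r_k=-\tfrac12 M_k^2+O(\log M_k)$ and $\int_{\R{2}}e^\eta=8\pi$. The outer part converges by $C^1_{\loc}$ convergence and, after $\delta\to 0$, gives $\int_{B_1}\log|y|\,\lambda_\infty u_\infty e^{u_\infty^2}\,dy$. The neck expansion $u_k(r)\approx M_k-(2/M_k)\log(r/r_k)$ controls the middle region and shows its contribution is $o(1)$ for suitable cutoffs $L=L_k\to\infty$. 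Combining,
\be
0\;=\;\int_{B_1}\log|y|\,\lambda_\infty u_\infty e^{u_\infty^2}\,dy\;=\;-2\pi u_\infty(0),
\ee
the last equality being Green's representation for $u_\infty$ itself. Hence $u_\infty(0)=0$, and since $u_\infty\ge 0$ is radial with maximum at the origin, $u_\infty\equiv 0$.

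With $u_\infty\equiv 0$ in hand, I would consider $v_k:=M_k u_k$. The neck expansion combined with the boundary condition $u_k(1)=0$ gives $v_k\to -2\log|x|=4\pi G_0(x)$ in $C^1_{\loc}(\overline B_1\setminus\{0\})$ and forces $\lambda_k M_k^2\to 4$ along the way. Since $-\Delta v_k=\lambda_k v_k e^{u_k^2}$ and $e^{u_k^2}\to 1$ away from $0$, passing to the limit against $\phi\in C_c^\infty(B_1\setminus\{0\})$ yields
\be
0\;=\;\int_{B_1}(-2\log|x|)(-\Delta\phi)\,dx\;=\;\lambda_\infty\int_{B_1}(-2\log|x|)\phi\,dx\qquad\text{for all such }\phi,
\ee
forcing $\lambda_\infty=0$. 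Finally the Dirichlet energy splits as $\Lambda_k=\int_{B_{Lr_k}}|\n u_k|^2+\int_{B_1\setminus B_{Lr_k}}|\n u_k|^2$. The first term is $O(M_k^{-2})\to 0$ by the inner rescaling, while the second, via $v_k\to -2\log|x|$, equals $(1/M_k^2)\int_{B_1\setminus B_{Lr_k}}|\n v_k|^2\,dx\approx (8\pi/M_k^2)\log(1/(Lr_k))\to 4\pi$. Thus $\Lambda_k\to 4\pi$, giving $L=1$, and \eqref{quant0} follows immediately from \eqref{quantdru}.

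The main technical obstacle is the careful neck/intermediate analysis used to split the Green integral. Concretely, one needs quantitative pointwise estimates for $u_k-M_k-\eta(\cdot/r_k)/(2M_k)$ in a ball of growing radius $L_k r_k$, extending the validity of the inner rescaling beyond compacta, together with uniform control of the neck expansion in the transition region between the inner bubble and the outer region. This refines Druet's $C^1_{\loc}$ convergence and is the main novelty needed beyond previously established tools.
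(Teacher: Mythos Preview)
There is a structural gap at the outset: you invoke Druet's quantization theorem, but that result requires an a priori bound $\Lambda_k\le C$ on the Dirichlet energies, which is \emph{not} among the hypotheses of Theorem~\ref{t:comp}. The paper stresses exactly this point (see the paragraph following \eqref{quantdru}): the theorem is stated and proved without any energy bound, and this is essential for its application to the non-existence part of Theorem~\ref{nonex}. So importing Druet's conclusions---the $C^1_{\loc}$ convergence, \eqref{quantdru}, and $\Lambda_k\to 4\pi L+\|u_\infty\|^2_{H^1_0}$---right after the dichotomy is not legitimate here.

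Even granting Druet, your arguments for $u_\infty\equiv 0$ and $L=1$ both rest on the single-bubble neck expansion $u_k(r)\approx M_k-(2/M_k)\log(r/r_k)$ holding throughout $B_1\setminus B_{Lr_k}$ (equivalently, $v_k=M_ku_k\to -2\log|x|$ on all of $B_1\setminus\{0\}$). If $L\ge 2$ there are secondary bubbles at intermediate scales $r_k\ll\rho_k\ll 1$, and on annuli around $\rho_k$ this expansion fails; your Green-integral neck term is then not $o(1)$, and your energy computation does not give $4\pi$. So the argument presupposes the one-bubble structure it is meant to establish. You correctly flag the quantitative neck control as the ``main technical obstacle,'' but that obstacle \emph{is} the proof: the paper's route is to establish the global pointwise bound $u_k(r)\le \mu_k-\mu_k^{-1}\log(1+(r/r_k)^2)$ on all of $[R_0r_k,1]$ directly (Lemma~\ref{lemma:stima}), via a second-order expansion $\eta_k=\eta_0+\mu_k^{-2}w+\phi_k$ (Lemma~\ref{lemma:w}) and a contraction-mapping control of $\phi_k$. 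Once this global estimate is in hand, $\lambda_k\to 0$, the vanishing of the neck energy, $u_\infty\equiv 0$, and \eqref{quant0} all follow by short direct computations---no appeal to Druet, no Green-function splitting, and no energy bound needed.
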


The proof of Theorem \ref{t:comp} is self-contained. In some parts we could have used previous results of \cite{AS} or \cite{Dru}, but these hold for general domains, and consequently their proofs are more involved and we did not want to rest on them. Our main argument is not based on a Pohozaev-type identity as the results in \cite{Dru}, \cite{LRS}, \cite{MS}  and \cite{struwe}, but on a simpler decay estimate of $u_k$ away from the blow-up point, which has some partial analogies with Lemma 3 of \cite{LS} (originating in \cite{sha}, see also \cite{BLS}). Notice that, contrary to the previous works, e.g. \cite{Dru}, in our Theorem \ref{t:comp} we do \emph{not} assume uniform bounds on $\|u_k\|^2_{H^1_0}$, i.e. $\Lambda_k\le C$. This is crucial if we want to apply Theorem \ref{t:comp} to prove Theorem \ref{nonex}.

\medskip

The final picture that we get is then much closer to the geometric situation of the Liouville equation as studied by Brezis-Merle, Li-Shafrir and Li. More precisely, and working again on $B_1$ for simplicity, consider a sequence $(v_k)$ of radially symmetric solutions to
$$-\Delta v_k= V_ke^{2v_k} \text{ in } B_1\subset\R{2},\quad V_k\to V_0>0\text{ in }C^0(\overline B_1),\quad \|e^{2v_k}\|_{L^1}\le C, \quad \sup_{B_1} v_k\to +\infty.$$
Then, as proven in \cite[Theorem 3]{BM},
\begin{equation}\label{ukin}
v_k\to -\infty\text{ uniformly locally in }B_1\setminus \{0\}
\end{equation}
and
\begin{equation}\label{quantlio}
\quad V_ke^{2v_k}dx \rightharpoonup \alpha \delta_0 \quad \text{weakly as measures, for some }\alpha\ge 2\pi.
\end{equation}
Here $V_ke^{2v_k}$ plays the role of the energy density $\lambda_k u_k^2e^{u_k^2}$ from \eqref{quantdru} and \eqref{quant0}. Then \eqref{ukin} and \eqref{quantlio} are the equivalent of $\lambda_\infty u_\infty^2 e^{u_\infty^2}dx=0$ in \eqref{quant0} (compared with \eqref{quantdru}).

Y-Y. Li and I. Shafrir, see \cite{yanyan},\cite{LS}, complemented the result of Brezis-Merle by showing that $\alpha=4\pi$ in \eqref{quantlio}, finally yielding $V_ke^{2v_k}\rightharpoonup 4\pi\delta_0$, in analogy with \eqref{quant0}.
On the other hand we remark that the proof of \eqref{quant0} is more subtle because the nonlinearity $ue^{u^2}$ is more difficult to handle than $e^{2v}$. In fact, as already noticed in previous works, e.g. \cite{AS}, suitable scalings $\eta_k$ of blowing-up solutions of \eqref{eqk} converge in $C^1_{\loc}(\R{2})$ to a solution $\eta_0$ of $-\Delta v=4e^{2v}$, see Lemma \ref{etamu2}. Unfortunately this information is too weak for our purposes and we need to linearize the equation satisfied by $\eta_k$ (Eq. \eqref{eq:etak} below) to better understand its asymptotics (Lemma \ref{lemma:w}), and to have a global estimate of $\eta_k$ (Lemma \ref{lemma:stima}).

\medskip

We also point out that an immediate consequence of the proof of Theorem \ref{nonex} is the existence
of blowing-up solutions to \eqref{eqk} with bounded energies ($\L_k \to 4 \pi$). This has long been an open problem: Adimurthi and Prashanth \cite{AP} were only able to prove the existence of blowing-up Palais-Smale sequences, while more recently Del Pino, Musso and Ruf, with an  approach technically much richer, showed that blowing-up solutions exist for any domain $\Omega$, see \cite{DMR2}. Our method applies only to the unit disk, but it is on the other hand relatively elementary and explicit. 

\

\noindent In the following the letter $C$ denotes a large constant which may change from line to line and even within the same line.

\

\begin{center}

 {\bf Acknowledgements}

\end{center}

\noindent The authors are supported by the project FIRB-Ideas
{\em Analysis and Beyond}, and L.M. was partially
supported by the Swiss National Fond Grant no. PBEZP2-129520.
During the preparation of this work
L.M. was hosted by SISSA, while A.M. by Centro de Giorgi in
Pisa. They are grateful to these institutions for the kind hospitality.

\section{Proof of Theorem \ref{t:comp}}

By \cite[Theorem 1]{GNN} a positive solution $u$ to \eqref{pb0} is radially symmetric.   With a little abuse of notation we shall write $u(x)=u(r)$ for $x\in B_1$ with $|x|=r$. Since $\Delta u\le 0$
$$2\pi u'(r)=\int_{B_r}\Delta udx < 0; \qquad r > 0,$$
hence $u(r)$ is decreasing.

Consider now a sequence $u_k$ as in the statement of the theorem. By elliptic estimates, if $\max_{B_1}u_k\le C$, then we are in case (i). Let us therefore assume that, up to a subsequence
\begin{equation}\label{ublow}
\mu_k:=u_k(0)=\max_{B_1} u_k \to \infty\quad \text{as }k\to\infty.
\end{equation}

\begin{lemma}\label{etamu2} Let $r_k>0$ be such that $r_k^2 \lambda_k \mu_k^2 e^{\mu_k^2}=4$. Then as $k\to\infty$ we have $r_k\to 0$,
\begin{equation}\label{etamu}
\eta_k(x):=\mu_k (u_k(r_k x)-\mu_k) \to \eta_0(x):= -\log(1+|x|^2)\quad \text{in }C^1_{\loc}(\R{2}),
\end{equation}
and
\begin{equation}\label{enerbolla}
\lim_{R\to\infty}\lim_{k\to\infty}\int_{B_{Rr_k}}\lambda_k u_k^2 e^{u_k^2}dx=\int_{\R{2}}4e^{2\eta_0}dx=4\pi.
\end{equation}
\end{lemma}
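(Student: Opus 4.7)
The first and arguably central obstacle is to show $r_k\to 0$ \emph{without} any a priori bound on $\Lambda_k$. My plan is to use a maximum principle comparison: since $s\mapsto s e^{s^2}$ is non-decreasing on $[0,\infty)$ and $0\le u_k\le \mu_k$, the equation gives $-\Delta u_k\le \lambda_k\mu_k e^{\mu_k^2}$ on $B_1$. Comparing $u_k$ with the explicit radial solution $v(x)=\lambda_k\mu_k e^{\mu_k^2}(1-|x|^2)/4$ of $-\Delta v=\lambda_k\mu_k e^{\mu_k^2}$, $v|_{\partial B_1}=0$, the maximum principle yields $u_k\le v$ on $B_1$. Evaluating at $0$ gives $\mu_k\le \lambda_k\mu_k e^{\mu_k^2}/4$, i.e.\ $\lambda_k e^{\mu_k^2}\ge 4$, so that $r_k^2\le 1/\mu_k^2\to 0$.

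Next I would derive the rescaled equation: using $u_k(r_k x)=\mu_k+\eta_k(x)/\mu_k$ and the identity $r_k^2\lambda_k\mu_k^2 e^{\mu_k^2}=4$, a direct computation gives
$$-\Delta\eta_k=4\bra{1+\frac{\eta_k}{\mu_k^2}}e^{2\eta_k+\eta_k^2/\mu_k^2}\quad\text{on }B_{1/r_k},$$
with $\eta_k(0)=0$, $\eta_k\le 0$ (since $u_k\le \mu_k$), $\eta_k$ radial and non-increasing in $|x|$.

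The key analytic step is a uniform bound on $\eta_k$ on compacta, for which I would use a bootstrap based on the radial ODE. Set $T_k:=\sup\set{r\in[0,1/r_k):|\eta_k(\rho)|\le \mu_k\text{ for all }\rho\in[0,r]}$. For $\rho\in[0,T_k]$ we have $1+\eta_k/\mu_k^2\in[0,1]$, $e^{2\eta_k}\le 1$, and $e^{\eta_k^2/\mu_k^2}\le e$, hence $0\le -\Delta\eta_k\le 4e$. Integrating the radial form $r\eta_k'(r)=\int_0^r s\,\Delta\eta_k(s)\,ds$ gives $|\eta_k'(r)|\le 2er$ and then $|\eta_k(r)|\le er^2$ on $[0,T_k]$. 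A closing argument (either $T_k=1/r_k$, or $|\eta_k(T_k)|=\mu_k\le eT_k^2$) then yields $T_k\to\infty$, so that for every fixed $R>0$ the bound $|\eta_k|\le eR^2$ holds on $B_R$ for all $k$ large.

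With these uniform bounds on $\eta_k$ and on $-\Delta\eta_k$ on compacta, standard elliptic regularity yields subsequential convergence $\eta_k\to\eta$ in $C^1_\loc(\R{2})$, and, since $\eta_k/\mu_k^2\to 0$ uniformly on compacta, the limit is radial with $\eta(0)=0$, $\eta\le 0$, and $-\Delta\eta=4e^{2\eta}$ on $\R{2}$. By uniqueness of the radial Cauchy problem $(r\eta')'=-4r\,e^{2\eta}$ with $\eta(0)=\eta'(0)=0$ (equivalently, the Chen--Li classification), $\eta\equiv \eta_0=-\log(1+|x|^2)$; uniqueness of the limit promotes the convergence to the full sequence. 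Finally, for \eqref{enerbolla}, the change of variables $x=r_k y$ together with $r_k^2\lambda_k\mu_k^2 e^{\mu_k^2}=4$ gives
$$\int_{B_{R r_k}}\lambda_k u_k^2 e^{u_k^2}\,dx=\int_{B_R}4\bra{1+\frac{\eta_k}{\mu_k^2}}^2 e^{2\eta_k+\eta_k^2/\mu_k^2}\,dy,$$
and dominated convergence (justified by the uniform bound from the bootstrap) yields the inner limit $\int_{B_R} 4e^{2\eta_0}\,dy=\int_{B_R}\frac{4}{(1+|y|^2)^2}\,dy$; sending $R\to\infty$ produces $4\pi$.
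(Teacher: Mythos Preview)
Your argument is correct and complete. It differs from the paper's proof in two places, and in both your route is somewhat more elementary.

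For $r_k\to 0$, the paper argues by contradiction: if $\lambda_k\mu_k^2 e^{\mu_k^2}$ stayed bounded, then $-\Delta u_k\to 0$ uniformly and elliptic estimates force $u_k\to 0$, contradicting $\mu_k\to\infty$. Your direct maximum-principle comparison with the paraboloid is neater and even yields the quantitative bound $r_k\le 1/\mu_k$.

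For the local uniform bounds on $\eta_k$, the paper proceeds in two Harnack steps: first it shows $v_k:=u_k(r_k\cdot)-\mu_k\to 0$ in $C^1_{\loc}$ (using $-\Delta v_k\to 0$, $v_k\le 0$, $v_k(0)=0$ and Harnack), then rewrites the equation as $-\Delta\eta_k=V_k e^{2a_k\eta_k}$ with $V_k\to 4$, $a_k\to 1$ locally, and applies Harnack again. Your ODE bootstrap with the continuation radius $T_k$ avoids Harnack entirely by exploiting the radial structure directly; the crude bound $|\eta_k(r)|\le e r^2$ on $[0,T_k]$ together with $|\eta_k(T_k)|=\mu_k$ (if $T_k<1/r_k$) is enough to push $T_k\to\infty$. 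This is a genuinely hands-on alternative and is arguably more in the spirit of the rest of the paper, which also works with the radial ODE.

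The identification of the limit via uniqueness of the radial Cauchy problem, and the energy computation (you use dominated convergence with the bound from your bootstrap; the paper simply cites Fatou, which strictly speaking only gives the lower bound, but uniform convergence on $B_R$ makes the passage to the limit immediate anyway) are essentially the same in both proofs.
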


\begin{proof} We first prove that $\lim_{k\to \infty} r_k=0$. Otherwise up to extracting a subsequence we have $\lambda_k \mu_k^2 e^{\mu_k^2}\le C$. Then, using that $u_k'\le 0$ in $[0,1]$ we see that
$$-\mu_k\Delta u_k \le \lambda_k \mu_k^2e^{\mu_k^2}\le C\quad \text{in }B_1.$$
Therefore as $k\to \infty$ we get $\Delta u_k\to 0$ uniformly and by elliptic estimates $u_k\to 0$ in $C^1(\overline B_1)$, contradicting \eqref{ublow}.

Set now $v_k(x)=u_k(r_k x)- \mu_k$. We claim that $v_k\to 0$ in $C^{1}_{\loc}(\R{2})$ as $k\to\infty$. Indeed
$$-\Delta v_k(x)= \frac{4}{\mu_k} \frac{u_k(r_k x)}{\mu_k}e^{(u(r_k x)-\mu_k)(u(r_k x)+\mu_k)}\to 0\quad \text{uniformly as }k\to \infty$$
since $4/\mu_k\to 0$, $0\le u_k/\mu_k\le 1$ and $(u_k(r_k x)-\mu_k)(u_k(r_k x)+\mu_k)\le 0$. Now notice that $v_k\le 0$ and $v_k(0)=0$. Then the Harnack inequality implies the claim.

Therefore we have
$$-\Delta \eta_k =V_k e^{2 a_k \eta_k}\quad\text{in }B_{1/r_k},$$
where
$$V_k(x)=\frac{4 u_k(r_k x)}{\mu_k}\to 4,\quad a_k=\frac{1}{2}\bigg(\frac{u_k(r_k x)}{\mu_k}+1\bigg)\to 1\quad \text{in }C^0_{\loc}(\R{2}).$$
Considering that $\eta_k\le 0$, $\Delta \eta_k$ is locally bounded and $\eta_k(0)=0$ we have $\eta_k\to \eta^*$ in $C^{1}_{\loc}(\R{2})$ by the Harnack inequality, where $-\Delta \eta^*=4e^{2\eta^*}$ and $\eta^*(0)=0$. On the other hand
\begin{equation}\label{eq:eta0}
-\Delta \eta_0=4e^{2\eta_0}\text{ in }\R{2},\quad \eta_0(0)=0,
\end{equation}
hence it follows from the uniqueness of solutions
to the Cauchy problem (recall that all functions here are radially symmetric) that $\eta^*=\eta_0$.

Finally \eqref{enerbolla} follows from Fatou's lemma.
\end{proof}

Notice that
\begin{equation}\label{eq:etak}
-\Delta \eta_k =4\bigg(1+\frac{\eta_k}{\mu_k^2}\bigg) e^{\Big(2+\frac{\eta_k}{\mu_k^2}\Big)\eta_k}.
\end{equation}

\begin{lemma}\label{lemma:w} Set $w_k:=\mu_k^2(\eta_k-\eta_0).$ Then we have $w_k\to w$ in $C^1_{\loc}(\R{2}),$
where
\begin{equation}\label{defw}
w(r) := \eta_0(r) + \frac{2r^2}{1+r^2}-\frac{1}{2}\eta_0^2(r)+\frac{1-r^2}{1+r^2}\int_1^{1+r^2} \frac{\log t}{1-t}dt
\end{equation}
is the unique solution to the ODE
\begin{equation}\label{eqw}
-\Delta w= 4e^{2\eta_0}(\eta_0+\eta_0^2+2w),\quad w(0)=0,\quad w'(0)=0.
\end{equation}
Moreover $w$ satisfies
\begin{equation}\label{4pi}
\int_{\R{2}}\Delta wdx=-4\pi,
\end{equation}
and
\begin{equation}\label{wbound}
\sup_{r\in [0,\infty)}|w(r) -\eta_0(r)|<\infty.
\end{equation}
\end{lemma}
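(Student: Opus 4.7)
My plan is to derive a linear inhomogeneous ODE for $w_k$ by perturbation from \eqref{eq:etak} about $\eta_0$, prove uniform local bounds on $w_k$, pass to the $C^1_{\loc}$ limit, identify $w$ with the explicit formula \eqref{defw}, and finally verify \eqref{4pi} and \eqref{wbound} by direct asymptotic analysis of \eqref{defw}.

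First I would substitute $\eta_k = \eta_0 + w_k/\mu_k^2$ in \eqref{eq:etak} and subtract $-\Delta\eta_0 = 4e^{2\eta_0}$ to obtain
\[
-\Delta w_k = 4e^{2\eta_0}\mu_k^2\Bigl[\bigl(1+\tfrac{\eta_k}{\mu_k^2}\bigr)\exp\Bigl(\tfrac{2w_k+\eta_k^2}{\mu_k^2}\Bigr)-1\Bigr].
\]
Taylor-expanding the exponential to second order, and using the $C^1_{\loc}$ convergence $\eta_k \to \eta_0$ of Lemma \ref{etamu2}, this reads on any ball $B_R$
\[
-\Delta w_k = 4e^{2\eta_0}(2w_k + \eta_0 + \eta_0^2) + o(1) + O\bigl(w_k^2/\mu_k^2\bigr).
\]
The main obstacle is the quadratic-in-$w_k$ remainder coming from the second-order Taylor term: until a local $L^\infty$ bound on $w_k$ is known, it is not a priori negligible.

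To establish such a bound, I would exploit radial symmetry and the initial conditions $w_k(0) = w_k'(0) = 0$. Two integrations of the ODE $-(rw_k')' = rG_k(r)$ together with Fubini give
\[
|w_k(r)| \le \int_0^r s\,|G_k(s)|\,\log(r/s)\,ds,
\]
and since $|G_k(s)| \le C(1+W_k(s)) + o(1) + CW_k(s)^2/\mu_k^2$ on $[0,R]$ (with $W_k(r):=\sup_{s\le r}|w_k(s)|$), the identity $\int_0^r s\log(r/s)\,ds = r^2/4$ yields
\[
W_k(r) \le CR^2\bigl(1 + W_k(R) + W_k(R)^2/\mu_k^2\bigr) + o(R^2), \qquad r \le R.
\]
A continuity/bootstrap argument on the largest $r_k^* \le R$ for which $W_k(r_k^*)\le 1$ rules out $r_k^* < R$ for $R$ small and $k$ large, giving uniform boundedness on small disks; iterating extends this to every compact set. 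Once $w_k$ is locally bounded, the quadratic error term becomes $o(1)$ locally, Arzel\`a-Ascoli furnishes a $C^1_{\loc}$ subsequential limit $w$, and the passage to the limit in the ODE yields
\[
-\Delta w = 4e^{2\eta_0}(\eta_0 + \eta_0^2 + 2w), \qquad w(0) = w'(0) = 0.
\]
Uniqueness of the radial Cauchy problem for this second-order linear equation then identifies the full limit $w_k \to w$.

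For the explicit formula, I would observe that $\phi(r):=(r^2-1)/(r^2+1)$ solves the homogeneous linearized equation $-\Delta\phi = 8e^{2\eta_0}\phi$, since it arises as the scaling derivative $\partial_\lambda|_{\lambda=1}\log[\lambda/(\lambda^2+|x|^2)]$ of the Liouville bubble family. A second independent radial solution follows by reduction of order, and variation of parameters with forcing $4e^{2\eta_0}(\eta_0+\eta_0^2)$, followed by the conditions $w(0) = w'(0) = 0$, produces \eqref{defw}; alternatively one verifies \eqref{defw} by direct substitution. For \eqref{wbound}, the only summands of $w-\eta_0$ unbounded at infinity are $-\eta_0^2(r)/2$ and $[(1-r^2)/(1+r^2)]\int_1^{1+r^2}(\log t)/(1-t)\,dt$; using $\int_1^{1+r^2}(\log t)/(1-t)\,dt = -\tfrac{1}{2}\log^2(1+r^2) + O(\log r)$ and $\eta_0^2(r) = \log^2(1+r^2)$, these two pieces behave like $\mp 2\log^2 r$ as $r\to\infty$ and therefore cancel, leaving $w-\eta_0$ bounded. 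Finally, differentiating \eqref{defw} and computing $\lim_{r\to\infty} rw'(r) = -2$ (the only non-vanishing contribution comes from $r\,\eta_0'(r) \to -2$), one obtains $\int_{\R{2}}\Delta w = 2\pi\lim_{r\to\infty} rw'(r) = -4\pi$, which is \eqref{4pi}.
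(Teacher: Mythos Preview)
Your approach is essentially the same as the paper's: derive the linearized equation for $w_k$ from \eqref{eq:etak} and \eqref{eq:eta0} by Taylor expansion, use ODE arguments with the initial conditions $w_k(0)=w_k'(0)=0$ to get local uniform bounds, pass to the limit, and identify $w$ via uniqueness for the Cauchy problem \eqref{eqw}. The paper is terser on the local boundedness step --- it writes the expanded equation directly in the linear form $-\Delta w_k = 4e^{2\eta_0}[\eta_0+\eta_0^2+2w_k+o(1)+o(1)w_k]$ (using $\eta_k-\eta_0=o(1)$ from Lemma~\ref{etamu2} before dividing by $\varepsilon_k$, so no quadratic remainder appears) and then invokes ``ODE theory'', whereas your bootstrap with the $O(w_k^2/\mu_k^2)$ term is a more explicit variant of the same Gronwall-type reasoning. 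The paper verifies \eqref{defw} by directly computing $w'$ and $\Delta w$; your variation-of-parameters route via the scaling derivative $\phi=(1-r^2)/(1+r^2)$ is a natural alternative.

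There is, however, one genuine slip in your argument for \eqref{wbound}. You claim $\int_1^{1+r^2}\frac{\log t}{1-t}\,dt = -\tfrac12\log^2(1+r^2)+O(\log r)$, but an $O(\log r)$ remainder would only yield $w-\eta_0=O(\log r)$ after multiplication by $(1-r^2)/(1+r^2)\to -1$, not boundedness. In fact the remainder is $O(1)$: writing $\frac{1}{1-t}=-\frac{1}{t}+\frac{1}{t(1-t)}$ gives
\[
\int_1^{T}\frac{\log t}{1-t}\,dt=-\tfrac12\log^2 T+\int_1^{T}\frac{\log t}{t(1-t)}\,dt,
\]
and the last integral converges as $T\to\infty$ since the integrand is $O(t^{-2}\log t)$ for large $t$ and bounded near $t=1$. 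With this correction your cancellation argument goes through. The paper avoids this computation altogether by reading off from the explicit formula for $w'$ (its \eqref{eq:w'}) that $|w'(r)-\eta_0'(r)|\le C/(1+r^2)$ and integrating; this is a cleaner route to \eqref{wbound} and also immediately gives $\lim_{r\to\infty}rw'(r)=-2$ for \eqref{4pi}.
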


\begin{proof} Set $\ve_k:=\mu_k^{-2}\to 0$ as $k\to \infty$.
Using \eqref{eq:eta0} and \eqref{eq:etak} we compute
\begin{equation}\label{eqDw}
\begin{split}
-\Delta w_k= & \frac{1}{\ve_k}\left[4(1+\ve_k\eta_k)e^{(2+\ve_k \eta_k)\eta_k}-4 e^{2\eta_0} \right]\\
=&\frac{4e^{2\eta_0}}{\ve_k}\left[   (1+\ve_k(\eta_0+(\eta_k-\eta_0)))e^{2(\eta_k-\eta_0)+\ve_k\eta_0^2+2\ve_k\eta_0(\eta_k-\eta_0)+\ve_k(\eta_k-\eta_0)^2}-1 \right].
\end{split}
\end{equation}
By Lemma \ref{etamu2} for every $R>0$ we have $\eta_k(r)-\eta_0(r)=o(1)\to 0$ as $k\to \infty$  uniformly for $r\in [0,R]$, and we can use a Taylor expansion:
$$e^{2(\eta_k-\eta_0)+\ve_k\eta_0^2+2\ve_k\eta_0(\eta_k-\eta_0)+\ve_k(\eta_k-\eta_0)^2}=
1+2(\eta_k-\eta_0)+\ve_k\eta_0^2 +o(1)\ve_k +o(1)(\eta_k-\eta_0),$$
with errors $o(1)\to 0$ as $k\to \infty$ uniformly for $r\in [0,R]$. Going back to \eqref{eqDw} we get
$$-\Delta w_k=4e^{2\eta_0}[\eta_0+\eta_0^2+2w_k +o(1)+o(1)w_k],$$
with $o(1)\to 0$ as $k\to \infty$  uniformly for $r\in [0,R]$.
By ODE theory $w_k(r)$ is locally bounded, and by elliptic estimates $w_k\to \tilde w$ in $C^1_{\loc}(\R{2})$, where $\tilde w$ satisfies \eqref{eqw}.

Since the solution to the Cauchy problem \eqref{eqw} is unique, in order to prove that $\tilde w=w$ (with $w$ given in \eqref{defw}) it is enough to show that $w$ solves \eqref{eqw}.
It is easily seen that $w(0)=0$. First computing
\begin{equation*}
\Big(-\frac{1}{2}\eta_0^2(r)\Big)'+\frac{1-r^2}{1+r^2}\frac{d}{dr}\int_1^{1+r^2} \frac{\log t}{1-t}dt=-\frac{2\log(1+r^2)}{r(1+r^2)},
\end{equation*}
we get
\begin{equation}\label{eq:w'}
w'(r)=\frac{2r(1-r^2)}{(1+r^2)^2}-\frac{2\log(1+r^2)}{r(1+r^2)}-\frac{4r}{(1+r^2)^2} \int_1^{1+r^2} \frac{\log t}{1-t}dt,
\end{equation}
$w'(0)=0$,
and using $\Delta w(r)=w''(r)+\frac{w'(r)}{r}$ we finally get
\begin{equation*}
\begin{split}
-\Delta w(r)&= \frac{16 r^2}{(1+r^2)^3}-\frac{12\log(1+r^2)}{(1+r^2)^2}+\frac{8(1-r^2)}{(1+r^2)^3}\int_1^{1+r^2} \frac{\log t}{1-t}dt\\
&=4e^{2\eta_0}\bigg[\frac{4r^2}{1+r^2}+3\eta_0 +2\frac{1-r^2}{1+r^2}\int_1^{1+r^2} \frac{\log t}{1-t}dt\bigg]=4e^{2\eta_0}[\eta_0+\eta_0^2+2w].
\end{split}
\end{equation*}
To prove \eqref{4pi} we use the divergence theorem and \eqref{eq:w'} to get
$$\int_{\R{2}}\Delta wdx=\lim_{r\to\infty}2\pi r w'(r)=-4\pi.$$
Similarly from \eqref{eq:w'} we bound
$$|w'(r)-\eta_0'(r)|\le \frac{C}{1+r^2}\quad \text{for }r\in [0,\infty),$$
and integrating in $r$ also \eqref{wbound} follows.
\end{proof}

Lemma \ref{etamu2} tells us that for $R>0$ and $k\ge k_0(R)$ we have $\eta_k\to \eta_0$ in $C^1(B_R)$. On the other hand $\eta_k$ is defined on $B_{r_k^{-1}}$ with $r_k^{-1}\to \infty$ as $k\to\infty$, and Lemma \ref{etamu2} gives us no information on the behavior of $\eta_k$ on $B_{r_k^{-1}}\setminus B_R$. We shall now use Lemma \ref{lemma:w} to fill this gap and have a crucial estimate of $\eta_k$ in all of $B_{r_k^{-1}}$.

\begin{lemma}\label{lemma:stima} Fix $R_0\in (0,\infty)$ such that $w\le -1$ on $[R_0,\infty)$, where $w$ is given by \eqref{defw} and such $R_0$ exists thanks to \eqref{wbound}. Then for $k$ large enough
\begin{equation}\label{eq:cruc3}
\eta_k(r) \le \eta_0(r),\quad \text{for } r\in [R_0, r_k^{-1}],
\end{equation}
or equivalently
\begin{equation}\label{eq:cruc}
u_k(r) \le \mu_k -\frac{1}{\mu_k} \log \bigg(1+\Big( \frac r {r_k}\Big)^2\bigg),\quad \text{for } r\in [R_0r_k,1].\end{equation}
\end{lemma}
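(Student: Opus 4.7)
The plan is a contradiction argument combining an integral identity with the linearized asymptotics of Lemma \ref{lemma:w}. Setting $\phi_k := \eta_k - \eta_0$, I would first note the radial divergence identity
$$A_k(r) \;:=\; \int_{B_r}\bigl[(-\Delta\eta_k) - (-\Delta\eta_0)\bigr]\,dx \;=\; -2\pi r\,\phi_k'(r).$$
Assuming the lemma fails, I would pick $\bar r_k \in [R_0, r_k^{-1}]$ realizing $\max \phi_k > 0$. Since $w_k \to w$ in $C^1_{\loc}$ and $w(R_0) \le -1$, the identity $\phi_k = w_k/\mu_k^2$ gives $\phi_k(R_0) < 0$ for $k$ large, so $\bar r_k > R_0$; whether $\bar r_k$ is interior or equals $r_k^{-1}$, $\phi_k'(\bar r_k) \ge 0$, hence $A_k(\bar r_k) \le 0$.

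Next, from \eqref{4pi} and the radial divergence theorem,
$$-2\pi R\, w'(R) \;=\; \int_{B_R}(-\Delta w)\,dx \;\longrightarrow\; 4\pi \quad \text{as } R \to \infty.$$
I would then use a diagonal extraction to select $R_k \to \infty$ with $R_k \le r_k^{-1}$, $R_k / \mu_k \to \infty$, and $w_k \to w$ in $C^1([0, R_k])$. Two consequences would then be immediate: (i) $A_k(R_k) = -2\pi R_k\,w_k'(R_k)/\mu_k^2 = (4\pi + o(1))/\mu_k^2$; (ii) since $w \le -1$ on $[R_0, \infty)$, $w_k \le -1/2$ on $[R_0, R_k]$, so $\phi_k < 0$ on $[R_0, R_k]$, forcing $\bar r_k > R_k$.

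For $r \in [R_k, r_k^{-1}]$, since $u_k \ge 0$ implies $-\Delta\eta_k \ge 0$,
$$A_k(r) \;\ge\; A_k(R_k) - \int_{\R{2}\setminus B_{R_k}} \frac{4\,dx}{(1+|x|^2)^2} \;=\; A_k(R_k) - \frac{4\pi}{1+R_k^2}.$$
With $R_k/\mu_k \to \infty$ the subtracted term is $o(1/\mu_k^2)$, so $A_k(r) > 0$ on $[R_k, r_k^{-1}]$, yielding $A_k(\bar r_k) > 0$ and the desired contradiction.

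The hard part I anticipate is justifying the diagonal extraction with $R_k/\mu_k \to \infty$: $C^1_{\loc}$-convergence alone does not control the rate at which $R_k$ may tend to infinity. Closing this gap will require a quantitative refinement of Lemma \ref{lemma:w}: inspection of \eqref{eqDw} shows the discrepancy between the $w_k$- and $w$-equations is of size $O(\mu_k^{-2})$ times a polynomial in $\eta_0$, with only polynomial-in-$\log R$ growth, which via linear elliptic estimates for $w_k - w$ permits $R_k$ to grow essentially as any positive power of $\mu_k$, comfortably beyond $\mu_k$.
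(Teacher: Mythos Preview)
Your outline follows the same two-stage strategy as the paper's proof: first use the expansion $\eta_k \approx \eta_0 + \mu_k^{-2}w$ to obtain $\eta_k < \eta_0$ on an interval $[R_0, s_k]$ with $s_k/\mu_k \to \infty$, then compare the fluxes $\int_{B_r}(-\Delta\eta_k)$ and $\int_{B_r}(-\Delta\eta_0)$ to propagate the inequality to $[s_k, r_k^{-1}]$. The paper carries out the second stage directly rather than by contradiction, but the content is identical: once $-\int_{B_{s_k}}\Delta\eta_k > 4\pi > -\int_{B_r}\Delta\eta_0$ for all $r$, one has $\eta_k'(r) < \eta_0'(r)$ on $[s_k, r_k^{-1}]$, which is exactly your $A_k(r) > 0$.

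The gap you flag is genuine and is the technical heart of the lemma; your one-line sketch (``linear elliptic estimates for $w_k - w$'') does not yet constitute a proof. The $C^1_{\loc}$ convergence of Lemma~\ref{lemma:w} gives no rate whatsoever for $R_k$, and producing one requires real work. The paper does this by writing $\eta_k = \eta_0 + \varepsilon_k w + \phi_k$ (with $\varepsilon_k = \mu_k^{-2}$) and running a contraction-mapping argument in weighted norms on $[T, s_k]$, valid for any $s_k = o(e^{\mu_k})$; the outcome is
\[
|\phi_k(r)| \le C\varepsilon_k^2(1+\log r), \qquad |\phi_k'(r)| \le \frac{C\varepsilon_k^2}{r}, \qquad T \le r \le s_k,
\]
equivalently $|w_k - w| \le C\mu_k^{-2}(1+\log r)$ and $r|w_k' - w'| \le C\mu_k^{-2}$. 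Note also that your claim (i), $A_k(R_k) = (4\pi + o(1))/\mu_k^2$, actually requires $R_k\,|w_k'(R_k) - w'(R_k)| \to 0$, which is strictly stronger than ``$w_k \to w$ in $C^1([0, R_k])$''; the derivative estimate above supplies this with room to spare. In short, the proposal is structurally correct and matches the paper, but the step you postpone is precisely where the paper spends most of its effort.
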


\begin{proof}
Write $\ve_k:=\mu_k^{-2}=u_k^{-2}(0)$ and $ \eta_k = \eta_0 + \e_k w + \phi_k$.
Then \eqref{eq:etak} is equivalent to
\begin{equation*}
-\Delta  \eta_k = 4 \left( 1 + \e_k (\eta_0 +   \e_k w + \phi_k ) \right) e^{ (2 + \e_k (\eta_0 + \e_k w +
  \phi_k)) (\eta_0 + \e_k w + \phi_k )},
\end{equation*}
and taking \eqref{eq:eta0} and \eqref{eqw} into account we find
$$-\Delta \phi_k=\Phi_k(\phi_k),$$
where for any function $\phi$
$$
 \Phi_k(\phi) := 4 (1 + \e_k (\eta_0 + \e_k w + \phi)) e^{(2 + \e_k (\eta_0
  + \e_k w + \phi)) (\eta_0 + \e_k w + \phi)} - 4 e^{2 \eta_0} - 4 e^{2 \eta_0}
  \e_k \left[ \eta_0 + \eta_0^2  + 2 w \right].
$$
We now expand
\begin{equation*}
\begin{split}
  (2 + \e_k (\eta_0 + \e_k w +
  \phi)) (\eta_0 + \e_k w + \phi)  =& 2 \eta_0
  + 2 \e_k w +  2 \phi+\e_k \eta_0^2   + 2 \e_k^2 w \eta_0 + \e_k^3 w^2 \\
&+ \e_k \phi   \left(2 \eta_0 +2 \e_k w + \phi \right) =:2\eta_0+h_k(\phi).
\end{split}
\end{equation*}
To avoid cumbersome notations we will also write, for a given function $\phi$ and any given $k$,
\begin{equation*}
\begin{split}
h&:=h_k(\phi)=  2 \e_k w +  2 \phi+\e_k \eta_0^2   + 2 \e_k^2 w \eta_0 + \e_k^3 w^2 + \e_k \phi   \left(2 \eta_0 +2 \e_k w + \phi \right), \\
\eta&:= \eta_0+\ve_k w+\phi
\end{split}
\end{equation*}
so that
$$
   e^{(2 + \e_k \eta) \eta} = e^{2 \eta_0 + h},\quad \Phi_k(\phi)=4e^{2\eta_0}\left[(1+\ve_k\eta)e^h -1-\ve_k\eta_0 -\ve_k \eta_0^2-2\ve_k w \right].
$$
Then with a Taylor expansion we can write
$$
  e^{(2 + \e_k \eta) \eta}  = e^{2 \eta_0} \left[  1 + h + O(h^2) \right],
$$
where $|O(h^2) |\leq C h^2$ for a fixed positive constant $C$, provided $|h| \leq  1$. Then, using
\eqref{wbound} to bound $|w(r)|\le C(1+\log(1+r^2))$,
\begin{eqnarray*}
 (1+\ve_k \eta)e^{h} & = & 1 +
  \e_k \eta_0 + 2 \e_k w + \e_k \eta_0^2 + 2 \phi   \\
   & &+   O(\phi) \left( O(\phi) + O(\e_k (1+\log (1+r^2))^2)
   \right) + O(\e_k^2 (1+\log(1+r^2))^3)
\end{eqnarray*}
where $|O(s)| \leq C s$. Then
\begin{equation}\label{eq:phi}
    \Phi_k (\phi) = 4 e^{2 \eta_0} \left[ 2 \phi + O(\phi) \left(
    O(\phi) + O(\e_k (1+\log (1+r^2))^2)
   \right) + O(\e_k^2 (1+\log(1+r^2))^3)  \right],
\end{equation}
as long as $|h|\le 1$, which is true provided for some $\delta>0$ small enough
\begin{equation}\label{eq:expvalid}
    |\phi| \leq \delta; \quad \quad  \e_k (1+\log (1+r^2))^2 \leq \delta.
\end{equation}
Similarly if $\tilde{\phi}$ is another function with $|\tilde{\phi}(r)|\le \delta$ one has
\begin{equation}\label{eq:diffphi}
    |\Phi_k(\phi) - \Phi_k(\tilde{\phi})| \leq 4 e^{2 \eta_0} \left[ 2
  |\phi - \tilde{\phi}|  +O(\phi-\tilde \phi)\left(|\phi+\tilde \phi|+ O(\e_k (1+\log (1+r^2))^2)\right)   \right].
\end{equation}

We shall now use the contraction mapping theorem to bound $\phi_k$.
We restrict our attention to an interval $[0,s_k]$ with $s_k=o(1)e^{\mu_k}$ and to functions $\phi: [0,s_k]\to\R{}$ satisfying $\phi(r)\le O(\ve_k^2)(1 + \log(1 + r^2))$, so that \eqref{eq:phi} and \eqref{eq:expvalid} hold for $k$ large enough. With these restrictions \eqref{eq:diffphi} gives
\begin{equation}\label{eq:diffphi2}
    |\Phi_k(\phi) - \Phi_k(\tilde{\phi})| \leq (8 +o(1))e^{2 \eta_0}   |\phi - \tilde{\phi}|,
\end{equation}
with error $o(1)\to 0$ as $k\to\infty$.


\medskip

By the above computations, $\eta_k = \eta_0 + \e w + \phi_k$ solves
\eqref{eq:etak} if and only if $\phi_k$ satisfies
$$
    - \D \phi_k = \Phi_k(\phi_k); \qquad \quad \phi_k(0) = 0,\quad \phi_k'(0)=0.
$$
Setting $\phi = \phi_k$ and
$
  \psi = r \phi',
$
the last equation gives the system
\begin{equation}\label{eq:sys0}
    \left\{
     \begin{array}{ll}
       \phi' & = \frac{1}{r} \psi,   \\
       \psi' & =-r \Phi_k(\phi);
     \end{array} \right. \qquad \quad (\phi(0),\psi(0)) = (0,0).
\end{equation}
The solutions of \eqref{eq:sys0} are the fixed point of some integral equation.
For technical reasons, it will be convenient to integrate
starting from some value $T > 0$ (to be fixed later) of the $r$ parameter rather than from $r=0$.
If we let \eqref{eq:sys0} evolve up to time $T$, by the smooth dependence
on initial data then (for $\e_k$ small) the solution will satisfy
\begin{equation}\label{eq:phiT}
  |\phi(r)| \leq C(T) \e_k^2, \qquad \quad |\psi(r)| \leq C(T) \e_k^2, \qquad\quad \text{for }r\in [0,T],
\end{equation}
uniformly in $\e_k$. Notice that $\phi(T)=\phi_k(T)$ and $\phi(T)=T\phi_k'(T)$.

We consider then the functions
$$
  F_{1,(\phi,\psi)}(r) := \phi(T) + \int_T^r \psi(s)
  \frac{ds}{s}, \qquad \quad  r \geq T;
$$
$$
  F_{2,(\phi,\psi)}(r) := \psi(T) - \int_T^r s \Phi_k(\phi)(s) ds, \qquad \quad  r \geq T.
$$
Fixing $S=s_k > T$, with $s_k=o(1)e^{\mu_k}$, we next define the norms
$$
  \|f\|_1 = \sup_{r \in (T,S]} \left| \frac{f(r)}{\log r
  - \log T} \right|; \qquad \quad \|f\|_2 = 2\sup_{r\in [T,S]} |f(r)|.
$$
For a large constant $\tilde{C} > 0$ to be fixed later, we will work with the following
set of functions
$$
  \mathcal{B}_{\tilde{C}} = \left\{ (\phi,\psi) \; : \; \|\phi - \phi_k(T)\|_1
  \leq \tilde{C} \e_k^2,\; \|\psi\|_2 \leq \tilde{C} \e_k^2,\; \phi(T) = 
  \phi_k(T), \psi(T)=T\phi_k'(T) \right\}.
$$

\

\noindent We now check next that the map $(\phi,\psi)\mapsto (F_{1,(\phi,\psi)}, F_{2,(\phi,\psi)})$ sends
$\mathcal{B}_{\tilde{C}}$ in itself,
for suitable choices of $\tilde{C}$ and $T$, and that it is a contraction.
In fact, for $(\phi,\psi) \in \mathcal{B}_{\tilde{C}}$ one has that
$$
  |F_{1,(\phi,\psi)}(r) - \phi(T)| \leq \frac 12 \tilde{C} \e_k^2 (\log r - \log T),
$$
which implies $\|F_{1,(\phi,\psi)} - \phi(T)\|_1 \leq \frac 12  \tilde{C} \e_k^2$,
as desired.

Moreover by \eqref{eq:phi} and \eqref{eq:phiT} one has that
\begin{equation*}
\begin{split}
  |F_{2,(\phi,\psi)}(r)| \le&  |\psi(T)|+\int_T^r s4e^{2\eta_0(s)}\big[2\phi(s)+O(\phi(s))(O(\phi(s)) +O(\ve_k(1+\log(1+s^2))^2)    \\
& +O(\ve_k^2(1+\log(1+s^2))^3)\big]ds \\
\leq& C(T) \e_k^2 + 8 \int_T^\infty \frac{s (\phi(s)(1+o(1)) }{(1+s^2)^2} ds +\int_T^\infty \frac{C_0 \ve_k^2s(1+\log(1+s^2))^3)}{(1+s^2)^2} ds\\
\leq& C(T) \e_k^2 + 9 \int_T^\infty \frac{s \ve_k^2(C(T)+\tilde C \log s )}{(1+s^2)^2} ds +C_0\ve_k^2 \int_T^\infty \frac{s (1+\log(1+s^2))^3}{(1+s^2)^2} ds\\
\le & \ve_k^2\bigg[ C(T)\bigg(1+9\int_T^\infty\frac{s}{(1+s^2)^2} ds\bigg) \\
&+9\tilde C \int_T^\infty\frac{s\log s }{(1+s^2)^2}ds +C_0\int_T^\infty \frac{s(1+\log(1+s^2))^3}{(1+s^2)^2}ds\bigg]
\end{split}
\end{equation*}
for some fixed $C_0$ independent of $\tilde{C}$ and $\e_k$. Now first choosing $T\ge 1$ so large that
\begin{equation}\label{scegliT}
9\int_T^\infty\frac{s\log s ds}{(1+s^2)^2}< \frac{1}{2},
\end{equation}
and then $\tilde C$ large enough compared to $C(T)$ and $C_0$, we obtain
$$
    \|F_{2,(\phi,\psi)}\|_2 \leq \tilde{C} \e_k^2,
$$
so we are done showing that $(F_{1,(\cdot,\cdot)}, F_{2,(\cdot,\cdot)})$ maps $\mathcal{B}_{\tilde{C}}$ in itself.

Let us verify that $F$ is a contraction. We easily estimate
for $(\phi, \psi), (\tilde{\phi}, \tilde{\psi}) \in \mathcal{B}_{\tilde{C}}$
$$
   \|F_{1,(\phi,\psi)} - F_{1,(\tilde{\phi},\tilde{\psi})}\|_1 \leq \frac{1}{2}
   \| \psi - \tilde{\psi} \|_2.
$$
Using \eqref{eq:diffphi2} and \eqref{scegliT} we also find for $k$ large enough
\begin{equation*}
\begin{split}
  \|F_{2,(\phi,\psi)} - F_{2,(\tilde{\phi},\tilde{\psi})} \|_2 &\le 9\int_T^S\frac{s|\phi(s)-\tilde \phi(s)|}{(1+s^2)^2}ds \\
&\le 9\|\phi-\tilde\phi\|_1\int_T^S\frac{s(\log s-\log T)}{(1+s^2)^2}ds \leq \frac{1}{2}
   \|\phi - \tilde{\phi}\|_1,
\end{split}
\end{equation*}
so we have that indeed $F$ is a contraction.
In particular the map $(\phi,\psi)\mapsto (F_{1,(\phi,\psi)}, F_{2,(\phi,\psi)})$ has a fixed point in $(\overline\phi,\overline\psi)\in  \mathcal{B}_{\tilde C}$, which satisfies \eqref{eq:sys0}. Then, by uniqueness for the Cauchy problem, we have $(\overline\phi(r),\overline\psi(r))=(\phi_k(r),r\phi'_k(r))$ for $r\in [T,S]$, whence the bounds
\begin{equation}\label{stimaphi}
\phi_k(r)\le C(T)\ve_k^2+\tilde C\ve_k^2(\log r-\log T),\quad \phi_k'(r)\le \frac{\tilde C\ve_k^2}{2r},\quad \text{for } T\le r\le S=o(1)e^{\mu_k}.
\end{equation}

For every $k$ large enough fix now $S=s_k=o(1)e^{\mu_k}$ such that $s_k\ge 2\mu_k$. From \eqref{eq:phiT}, \eqref{stimaphi} and our choice of $R_0$, we get for $k$ large enough
\begin{equation}\label{eq:R0sk}
\eta_k(r)\le \eta_0(r)-\ve_k+(C(T)+\tilde C\log r)\ve_k^2<\eta_0(r),\quad \text{for } r\in [R_0,s_k].
\end{equation}
We shall now prove that
$$\int_{B_{s_k}} \Delta \eta_k dx <-4\pi$$
for $k$ large enough.
Indeed we have
$$-\int_{B_{s_k}}\Delta \eta_0dx=\int_{B_{s_k}}\frac{4}{(1+r^2)^2}dx=4\pi \bigg(1-\frac{1}{1+s_k^2}\bigg)=4\pi -\frac{4\pi}{s_k^2}+\frac{o(1)}{s_k^2}.$$
From \eqref{4pi} we have
$$-\int_{B_{s_k}} \ve_k\Delta wdx= 4\pi (1+o(1))\ve_k =\frac{4\pi (1+o(1))}{\mu_k^2}.$$
Finally, using \eqref{stimaphi} and the divergence theorem,
$$\bigg|\int_{B_{s_k}} \Delta \phi_k dx\bigg|= 2\pi s_k |\phi_k'(s_k)|=O(\ve_k^2)=(\mu_k^{-4}).$$
Summing up we infer
$$-\int_{B_{s_k}}\Delta \eta_k dx= 4\pi -\frac{4\pi(1+o(1))}{s_k^2}+\frac{4\pi (1+o(1))}{\mu_k^2}+O(\mu_k^{-4})>4\pi$$
for $k$ large enough, by our choice of $s_k$.
Since $\Delta \eta_k<0$ on all of $B_{1/r_k}$, we infer that
$$2\pi r \eta_k'(r)=\int_{B_r}\Delta\eta_k dx<\int_{B_{s_k}}\Delta\eta_k dx<-4\pi <\int_{B_r}\Delta \eta_0 dx=2\pi r \eta_0'(r),\quad r\in [s_k,1/r_k].$$
This, together with \eqref{eq:R0sk}, completes the proof of \eqref{eq:cruc3}.
\end{proof}

\medskip

\noindent\emph{Proof of Theorem \ref{t:comp} (completed).}
From \eqref{eq:cruc} it follows that $\lambda_k\to 0$ as $k\to\infty$. Indeed the function $-\frac{2}{\mu_k}\log \Big(\frac{r}{r_k}\Big)+\mu_k$ vanishes for $r=\rho_k=\frac{2}{\sqrt{\lambda_k}\mu_k}$.
Since $u_k>0$ in $B_1$ we must have for $k$ large enough $\rho_k\ge 1$, hence
\begin{equation}\label{lambdak}
\lambda_k\le \frac{4}{\mu_k^2} \to 0\quad \text{as } k\to\infty.
\end{equation}

Set $f_k:=\lambda_k u_k^2 e^{u_k^2}$. We now want to prove
\begin{equation}\label{eqsimple}
\lim_{R\to\infty}\lim_{k\to\infty}\int_{B_1\setminus B_{Rr_k}}f_kdx=0,
\end{equation}
which together with \eqref{enerbolla}, yields the convergence of $f_kdx$ in \eqref{quant0}.

Using the definition of $r_k$ and \eqref{eq:cruc} we have for $r\in [R_0r_k,1]$ and $k$ large enough
\begin{equation*}\begin{split}
r^2 f_k(r)&=4\bigg(\frac{r}{r_k}\bigg)^2\bigg(\frac{u_k(r)}{\mu_k}\bigg)^2e^{(1+u_k(r)/\mu_k)\eta_k(r/r_k)}\\
&\le 4\bigg(\frac{r}{r_k}\bigg)^2\bigg(\frac{u_k(r)}{\mu_k}\bigg)^2 \bigg( 1+ \bigg(\frac{r}{r_k}\bigg)^2\bigg)^{-1-\frac{u_k(r)}{\mu_k}} \\
&\le 4\bigg(\frac{u_k(r)}{\mu_k}\bigg)^2 \bigg( 1+ \bigg(\frac{r}{r_k}\bigg)^2\bigg)^{-\frac{u_k(r)}{\mu_k}}.
\end{split}
\end{equation*}
In order to further estimate the right-hand side, we notice that the function
$$\alpha_k(t):= t^2 \bigg( 1+ \bigg(\frac{r}{r_k}\bigg)^2\bigg)^{-t}$$
satisfies for any fixed $r>0$
$$\alpha_k\ge 0,\quad  \alpha_k(0)=0, \quad \lim_{t\to \infty }\alpha_k(t)=0\quad \alpha_k'(t)=0 \Leftrightarrow t=t_k:=\frac{2}{\log\Big(1+\Big(\frac{r}{r_k}\Big)^2\Big)}.$$
Hence $\alpha_k \le \alpha_k(t_k)$ and we conclude
$$r^2f_k(r)\le \frac{16}{\log^2\big(1+\big(\frac{r}{r_k}\big)^2\big)} \bigg(1+\Big(\frac{r}{r_k}\Big)^2\bigg)^{-2\log^{-1} (1+(r/r_k)^2)},\quad\text{for } r\ge R_0 r_k,\; k\text{ large},$$
which can be weakened to
\begin{equation}\label{eq:cruc5}
\log^2\Big(\frac{r}{r_k}\Big)^2  r^2f_k(r)\le \log^2\Big(1+\Big(\frac{r}{r_k}\Big)^2\Big) r^2f_k(r) \le 16, \quad\text{for } r\ge R_0r_k,\; k\text{ large}.
\end{equation}
Finally, it follows from \eqref{eq:cruc5} that
\begin{equation}\label{vanish}
\begin{split}
\lim_{R\to\infty}\lim_{k\to \infty}\int_{B_1\backslash B_{Rr_k}} f_k dx &\le \lim_{R\to\infty}\lim_{k\to \infty} \int_{Rr_k}^1 2\pi r f_k(r)dr   \le \lim_{R\to\infty}\lim_{k\to \infty}  \int_{Rr_k}^1 \frac{32\pi }{r[\log(r/r_k)]^2}dr\\
&=\lim_{R\to\infty}\lim_{k\to \infty} \bigg[-\frac{32\pi}{\log (r/r_k)}\bigg]_{Rr_k}^1=0.
\end{split}
\end{equation}
Then \eqref{eqsimple} is proven, and as already noticed  the first part of \eqref{quant0} follows.

Integrating by parts we also obtain
$$\|u_k\|^2_{H^1_0}=\int_{B_1}u_k(-\Delta u_k)dx=\int_{B_1}f_kdx\to 4\pi \quad\text{as }k\to\infty.$$
Then, up to extracting a subsequence we have $u_k\rightharpoonup u_\infty$ weakly in $H^1_0(B_1)$ for some function $u_\infty\in H^1_0(B_1)$.
Moreover, using that $\lambda_k\to 0$ as $k\to\infty$, we get for any $L>0$
$$\int_{B_1}\lambda_ku_ke^{u_k^2}dx\le \lambda_k\int_{\{x\in B_1:u_k(x)\le L\}}u_ke^{u_k^2}dx+\frac{1}{L}\int_{\{x\in B_1:u_k(x)< L\}}f_k dx\le o(1)C(L)+\frac{4\pi+o(1)}{L},$$
with error $o(1)\to 0$ as $k\to \infty$. Then, by letting $L\to\infty$ we infer that $\lambda_k u_ke^{u_k^2}\to 0$ in $L^1(B_1)$, and it follows that for every $\varphi\in C^1_c(B_1)$
$$\int_{B_1}(-\Delta u_\infty)\varphi dx= \lim_{k\to \infty}\int_{B_1}(-\Delta u_k)\varphi dx=\lim_{k\to \infty}\int_{B_1}-\lambda_k u_k e^{u_k^2}\varphi dx=0,$$
hence $-\Delta u_\infty=0$ in $B_1$, i.e. $u_\infty\equiv 0$.

This also implies the convergence of $|\nabla u_k|^2dx$ in \eqref{quant0}. Indeed, integrating by parts
and using that $u_k\to 0$ in $L^2(B_1)$ by the compactness of the embedding $H^1_0(B_1)\hookrightarrow L^2(B_1)$, we have for any $\varphi\in C^2_c( B_1)$
\begin{equation*}
\int_{B_1} |\nabla u_k|^2\varphi dx=\int_{B_1} \frac{\Delta(u_k^2)}{2}\varphi dx +\int_{B_1} f_k \varphi dx =\int_{B_1} \frac{u_k^2}{2} \Delta\varphi dx +\int_{B_1} f_k \varphi dx= \int_{B_1} f_k \varphi dx+o(1),
\end{equation*}
with error $o(1)\to 0$ as $k\to \infty$. Hence $f_kdx$ and $|\nabla u_k|^2 dx$ have the same weak limit in the sense of measures.

Now, using $u_k(1)=0$ and \eqref{quant0} 
we infer that $u_k\to 0$ in $L^\infty_{\loc}( B_1\setminus \{0\})$. Indeed for a fixed $\delta\in (0,1)$ and any $r\in [\delta,1]$ we have by H\"older's inequality
$$u_k(r)=u_k(r)-u_k(1)\le  \int_\delta^1|u_k'(\rho)|d\rho\le \|\nabla u_k\|_{L^2(B_1\setminus B_\delta)}\Big(\frac{1}{2\pi}\log\frac{1}{\delta}\Big)^{\frac{1}{2}}\to 0\quad \text{as } k \to \infty.$$
Then by elliptic estimates $u_k\to 0$ in $C^1_{\loc}( \overline B_1\setminus \{0\})$.
This completes the proof of Theorem \ref{t:comp}. \hfill $\square$

\section{Proof of Theorem \ref{nonex}}

Given $\mu,\lambda>0$ let $\overline u_{\mu,\lambda}\in C^\infty([0,T_{\mu,\lambda}))$ be the solution to the ODE
\begin{equation}\label{ODE}
-\frac{\de^2 \overline u}{\de r^2}-\frac{1}{r}\frac{\de \overline u}{\de r}=\lambda \overline u e^{\overline u^2},\quad \overline u(0)=\mu,\quad \overline u'(0)=0,
\end{equation}
where $[0,T_{\mu,\lambda})$, is the maximal interval of existence
for \eqref{ODE} (in fact $T_{\mu,\lambda}=\infty$, but we will not prove this). Then $u_{\mu,\lambda}(x):=\overline u_{\mu,\lambda}(|x|)$ satisfies
$$-\Delta u_{\mu,\lambda}=\lambda u_{\mu,\lambda}e^{u_{\mu,\lambda}^2}\quad \text{ in }B_{T_{\mu,\lambda}},
\qquad \quad u_{\mu,\lambda}(0)=\mu.$$
Set
$$\tau(\mu):=\inf\{r\in (0,T_{\mu,1}]:\overline u_{\mu,1}(r)=0\}.$$
We claim that $\tau(\mu)<\infty$ for every $\mu>0$. To see this, fix $r_0\in (0, \tau(\mu))$. Then for $r\in [ r_0,\tau(\mu)]$ the divergence theorem yields
$$\overline u_{\mu,1}'(r)=\frac{1}{2\pi r}\int_{B_r}\Delta u_{\mu,1}dx\le \frac{1}{2\pi r}\int_{B_{r_0}}\Delta u_{\mu,1}dx<-\frac{\ve}{r},$$
for some positive $\ve$. The claim easily follows from standard comparison arguments.

Now notice that, for any $\l, \l' > 0$
$$\overline u_{\mu,\lambda}\bigg(\sqrt{\frac{\lambda'}{\lambda}}r\bigg)= \overline u_{\mu,\lambda'}(r),$$
and for every $\mu>0$ set $\overline u_\mu:=\overline u_{\mu,\tau^2(\mu)}=\overline u_{\mu,\lambda_\mu}$, where $\lambda_\mu:=\tau^2(\mu)$. Then $\overline u_\mu$ is  positive in $[0,1)$, it solves \eqref{ODE} with $\lambda=\lambda_\mu$, and $\overline u_\mu(1)=0$.
By ODE theory the function $\Phi:\mu\mapsto \overline u_\mu|_{[0,1]}$ belongs to $C^0((0,\infty),C^2([0,1]))$.

Now given $\Lambda>0$ every non-negative critical point of $E|_{M_\Lambda}$ is smooth and satisfies
\begin{equation}\label{eqMT}
-\Delta u=\lambda ue^{u^2} \text{ in }B_1,\quad u=0\text{ on }\de B_1,
\end{equation}
for some $\lambda>0$.
By \cite[Theorem 1]{GNN} we have that $u$ is radially symmetric, i.e. we can write $u(x)=\overline u(|x|)$, where $\overline u$ satisfies \eqref{ODE} with $\mu=u(0)$ and the additional condition that $\overline u>0$ on $[0,1)$ and $\overline u(1)=0$. This is possible only if $\overline u=\overline u_{\mu}$.
Hence we have proven that every  solution $0\le u\not\equiv 0$ of \eqref{eqMT} is of the form $u(x)=u_\mu(x):=\overline u_\mu(|x|)$ for some $\mu> 0$. Define
$$\M{E}(\mu):= \|u_\mu\|_{H^1_0(B_1)}^2,\quad \Lambda^{\sharp}:=\sup_{\mu\in (0,\infty)}\M{E}(\mu).$$
We claim that $\Lambda^{\sharp}<\infty$. Indeed $\M{E}$ is continuous and it is clear that $u_\mu\to 0$ smoothly as $\mu\downarrow 0$, hence $\lim_{\mu\downarrow 0}\M{E}(\mu)=0$. Moreover Theorem \ref{t:comp} gives $\lim_{\mu\to\infty}\M{E}(\mu)=4\pi$, hence by continuity $\Lambda^{\sharp}<\infty$. This completes the proof of Part (i) of the theorem. By Theorem 1.7 in \cite{sIHP} it follows that $\Lambda^\sharp>4\pi$, and Parts (ii) and (iii) follow at once from the continuity of $\mathcal{E}$.
\hfill $\square$


\begin{thebibliography}{2}
\small

\bibitem{adi} \textsc{Adimurthi}, \emph{Existence of positive solutions of the semilinear Dirichlet problem with critical growth for the $n$-Laplacian}, Ann. Sc. Norm. Sup. Pisa, Ser. IV \textbf{17} (1990), 393-413.

\bibitem{AD} \textsc{Adimurthi, O. Druet,} \emph{Blow-up analysis in dimension $2$ and a sharp form of Trudinger-Moser inequality}, Comm. Partial Differential Equations \textbf{29} (2004), 295-322.

\bibitem{AP} \textsc{Adimurthi, S. Prashanth,} \emph{Failure of Palais-Smale condition and blow-up analysis for the critical exponent problem in $\R{2}$}, Proc. Inddian Acad. Sci. Math. Sci. \textbf{107} (1997), 283-317.

\bibitem{AS} \textsc{Adimurthi, M. Struwe}, \emph{Global compactness properties of semilinear elliptic equations with critical exponential growth}, J. Funct. Anal. \textbf{175} (2000), 125-167.



\bibitem{BM} \textsc{H. Br\'ezis, F. Merle} \emph{Uniform estimates and blow-up behaviour for solutions of $-\Delta u=V(x)e^u$ in two dimensions}, Comm. Partial Differential Equations \textbf{16} (1991), 1223-1253.

\bibitem{BLS} \textsc{H. Br\'ezis, Y-Y. Li, I. Shafrir,} \emph{A $\sup+\inf$ inequality for some nonlinear elliptic equations involving exponential nonlinearities}, J. Funct. Anal. \textbf{115} (1993), 344-358.


\bibitem{CC} \textsc{L. Carleson, S.-Y. A. Chang}, \emph{On the existence of an extremal function for an inequality of J. Moser}, Bull. Sci. Math. (2) \textbf{110} (1986) 113-127.



\bibitem{cor} \textsc{J.-M. Coron}, \emph{Topologie et cas limite des injections de Sobolev}, C.R. Math. Acad. Sci. Paris \textbf{299} (I) (1984) 209-212.

\bibitem{DDR} \textsc{D.G. de Figueiredo, J.M. do O, B. Ruf}, \emph{On an inequality by N. Trudinger and J. Moser and related elliptic equations}, Comm. Pure Appl. Math. \textbf{55} (2002), 135-152.

\bibitem{DMR1} \textsc{M. del Pino, M. Musso, B. Ruf}, \emph{New solutions for Trudinger-Moser critical
equations in $\mathbb{R}^2$},  J. Funct. Analysis \textbf{258} (2010), 421-457.

\bibitem{DMR2} \textsc{M. del Pino, M. Musso, B. Ruf}, \emph{Beyond the Trudinger-Moser Supremum},
Calc. Var. and PDEs, to appear.





\bibitem{Dru} \textsc{O. Druet,} \emph{Multibumps analysis in dimension $2$, quantification of blow-up levels}, Duke Math. J. \textbf{132} (2006), 217-269.


\bibitem{flu} \textsc{M. Flucher}, \emph{Extremal functions for the Trudinger-Moser inequality in 2 dimensions}, Comment. Math. Helv. \textbf{67} (3)
(1992) 471-497.

\bibitem{GNN} \textsc{B. Gidas, W.-M. Ni, L. Nirenberg,} \emph{Symmetry and related properties via the maximum principle}, Commun. Math. Phys. \textbf{68} (1979), 209-243.


\bibitem{jl} \textsc{ D.-D. Joseph, T.-S. Lundgren,} \emph{Quasilinear Dirichlet
problems driven by positive sources}, Arch. Rat. Mech. Anal.  \textbf{49}
(1973),  241-269.

\bibitem{LRS} \textsc{T. Lamm, F. Robert, M. Struwe,} \emph{The heat flow with a critical exponential nonlinearity}, J. Funct. Analysis \textbf{257} (2009), 2951-2998.

\bibitem{yanyan}  \textsc{Y.-Y. Li}, \emph{Harnack type inequality: The method of moving planes}, Commun. Math. Phys. \textbf{200}-2, (1999), 421-444.

\bibitem{LS} \textsc{Y.-Y. Li, I. Shafrir} \emph{Blow-up analysis for solutions of $-\Delta u = Ve^u$ in dimension $2$}, Indiana Univ. Math. J. \textbf{43} (1994), 1255-1270.


\bibitem{mar2} \textsc{L. Martinazzi} \emph{A threshold phenomenon for embeddings of $H^m_0$ into Orlicz
spaces}, Calc. Var. Partial Differential Equations \textbf{36} (2009), 493-506.

\bibitem{MS} \textsc{L. Martinazzi, M. Struwe} \emph{Quantization for an elliptic equation of order $2m$ with critical exponential non-linearity}, Math. Z.  \textbf{270} (2012), 453-487.

\bibitem{mp} \textsc{F. Mignot, J.-P. Puel,} \emph{Sur une classe de probl\`emes
non lin\'eaires avec nonlin\'earit\'e positive, croissante,
convexe}, Comm. Part. Diff. Eq.  \textbf{5} (8) (1980) , 791-836.

\bibitem{mon} \textsc{J.-P. Monahan}, \emph{Numerical solution of a non-linear boundary value problem}, thesis, Princeton Univ., 1971.

\bibitem{mos} \textsc{J. Moser}, \emph{A sharp form of an inequality by N. Trudinger}, Indiana Univ. Math. J. \textbf{20} (1971), 1077-1092.


\bibitem{scho} \textsc{R. Schoen}, \emph{On the number of constant scalar curvature
metrics in a conformal class}, in "Differential Geometry: A Symposium in Honor of Manfredo Do Carmo", (H.B.Lawson and K.Tenenblat Editors), (1991), 331-320, Wiley, New York.


\bibitem{sha} \textsc{I. Shafrir}, \emph{Une in\'egalit\'e de type Sup $+$ Inf pour l'\'equation $-\Delta u = V e^u$}, C. R. Acad. Sci. Paris \textbf{315} (1992), 159-164.

\bibitem{sIHP} \textsc{M. Struwe}, \emph{Critical points of embeddings of $H^{1,n}_0$
into Orlicz spaces}, Ann. Inst. H. Poincar\'e Anal. Non Lin\'eaire \textbf{5} (1984) 425-464.

\bibitem{sJEMS} \textsc{M. Struwe}, \emph{Positive solutions of critical semilinear elliptic equations on non-contractible planar domains}, J. Eur. Math. Soc. (JEMS) \textbf{2} (4) (2000) 329-388.

\bibitem{struwe} \textsc{M. Struwe} \emph{Quantization for a fourth order equation with critical exponential growth}, Math. Z. \textbf{256} (2007), 397-424.


\bibitem{tru} \textsc{N.S. Trudinger}, \emph{On embedding into Orlicz spaces and some applications},
J. Math. Mech. \textbf{17} (1967), 473-483.

\end{thebibliography}
\end{document}